\title{Fixed Point Theorem for Path-Averaged Contractions in Complete b-Metric Spaces\footnote{MSC2020: 47H10, 54E50 \\Keywords: Path Averaged Contractions, contractive mapping, fixed point, complete metric space.}}
\author{Nicola Fabiano\, \orcidlink{0000-0003-1645-2071}}
\affil{``Vin\v{c}a'' Institute of Nuclear Sciences - National 
Institute of the Republic of Serbia, University of Belgrade, Mike Petrovi\'{c}a 
Alasa 12--14, 11351 Belgrade, Serbia; nicola.fabiano@gmail.com}
\date{}
\newtheorem{theorem}{Theorem}
\newtheorem{lemma}[theorem]{Lemma}
\newtheorem{proposition}[theorem]{Proposition}
\newtheorem{definition}[theorem]{Definition}
\newtheorem{example}[theorem]{Example}
\newtheorem{remark}[theorem]{Remark}
\newtheorem{corollary}[theorem]{Corollary}
\begin{document}

\maketitle

%%%%%%%%%%%%%%%%%%%%%%%%%%%%%%%%%%%%%%%%%%%%%%%%%%%%%%%%%%

\begin{abstract}
We extend the fixed point result for Path-Averaged Contractions (PA-contractions) from complete metric spaces to complete b-metric spaces. We prove that every  PA-contraction on a complete b-metric space has a unique fixed point, provided the contraction constant $ \alpha $ satisfies $s \alpha^{1/N} < 1$, where $ s \geq 1 $ is the b-metric coefficient and $N$ the averaging parameter. 
Moreover, we establish that every PA-contraction is automatically continuous.
The proof relies on geometric decay of successive distances and the generalized triangle inequality. 
This result paves the way for extending averaged contraction principles to other classical types, such as Kannan, Chatterjea, and Ćirić-type mappings, as well as Wardowski’s F-contractions, in generalized metric settings.
\end{abstract}

\section{Introduction}

Path-Averaged Contractions (PA-contractions) were recently introduced as a generalization of Banach contractions, where contraction is measured in terms of the average distance along orbits~\cite{Fpathaveraged}. It has been established that such mappings possess a unique fixed point in complete metric spaces. In this paper, we extend this result to the broader class of \textbf{complete b-metric spaces}, which relax the standard triangle inequality by allowing a multiplicative constant $ s \geq 1 $.

We show that the fixed point property persists under the additional condition that  $s \alpha^{1/N} < 1$, and provide a self-contained proof of existence and uniqueness. These assumptions arise naturally from the structure of b-metric spaces and may be removable in symmetric subclasses of contractions.

Our main result establishes the existence and uniqueness of fixed points for PA-contractions in b-metric spaces. We compare PA-contractions with two foundational classes: Banach and Kannan contractions. We prove that every Banach contraction is a PA-contraction, but not conversely; hence, PA-contractions strictly generalize Banach's principle. For Kannan contractions, we show that there exist 
PA-contractions that are not Kannan contractions. The question for the converse relationship remains open.

The investigation of PA-type conditions for other classical principles — including Chatterjea \cite{Chatterjea1972}, \'Ciri\'c-type mappings \cite{Ciric1974}, and Wardowski’s F-contractions \cite{Wardowski2012,fabiano2022} — remains open and will be addressed in subsequent work.
\section{Preliminaries}
\label{sec:preliminaries}
We begin with the definition of a b-metric space.

\begin{definition}[b-metric space]
Let $ X $ be a nonempty set, $ s \geq 1 $ a real number, and $ d: X \times X \to [0, \infty) $ a function satisfying:
\begin{enumerate}[label=\textup{(\roman*)}]
    \item $ d(x, y) = 0 $ if and only if $ x = y $,
    \item $ d(x, y) = d(y, x) $ for all $ x, y \in X $,
    \item $ d(x, z) \leq s \big( d(x, y) + d(y, z) \big) $ for all $ x, y, z \in X $.
\end{enumerate}
Then $ (X, d) $ is called a \textbf{b-metric space} with coefficient 
$ s $ \cite{Czerwik1993}. 
A sequence $ \{x_n\} $ in $ X $ is a Cauchy sequence if $ \lim_{m,n \to \infty} d(x_n, x_m) = 0 $. The space is \textbf{complete} if every Cauchy sequence converges in $ X $.
\label{bmetricspace}
\end{definition}

We now define the main concept of this paper.

\begin{definition}[PA-contraction]
Let $ (X, d) $ be a b-metric space. A mapping $ T: X \to X $ is called a \textbf{PA-contraction} (Path-Averaged Contraction) if there exist constants $ \alpha \in (0,1) $ and $ N \in \mathbb{N} $ such that for all $ x, y \in X $ and all $ n \geq N $,
\begin{equation}
    \sum_{k=0}^{n-1} d(T^{k+1}x, T^{k+1}y) \leq \alpha \sum_{k=0}^{n-1} d(T^k x, T^k y).
    \label{eq:pa}
\end{equation}
\end{definition}

This condition implies that the cumulative distance between iterated images contracts on average by a uniform factor $ \alpha < 1 $, independent of the initial points $ x, y $. Such path-averaging ideas have parallels in orbital continuity \cite{pantorbit,Rhoades1977} and generalized contractions \cite{Proinov2020}.

\section{Main Result}
\label{sec:mainresult-pa}
%%%%%%%%%%%%%%%%%%%%%%%%%%%%%%%%%%%%%%%%%%%%%%%%% begin continuity lemma

%%%%%%%%%%%%%%%%%%%%%%%%%%%%%%
%%%%%%%%% Begin Continuity

%\section{Continuity of PA-Contractions}
%\label{sec:continuity}

In the following lemma, we address an important property of PA-contractions: prove that every PA-contraction is automatically continuous.

\begin{lemma}[Continuity of PA-Contractions]
\label{thm:continuity}
Let $(X, d)$ be a b-metric space with coefficient $s \geq 1$, and let $T: X \to X$ be a PA-contraction with constants $\alpha \in (0,1)$ and $N \in \mathbb{N}$. Then $T$ is Lipschitz continuous.
\end{lemma}

\begin{proof}
Let $T$ be a PA-contraction with constants $\alpha \in (0,1)$ and $N \in \mathbb{N}$. We will show that $T$ satisfies a Lipschitz condition.

Fix $x, y \in X$. Since $T$ is a PA-contraction, for $n = N$ we have
\[
\sum_{k=0}^{N-1} d(T^{k+1}x, T^{k+1}y) \leq \alpha \sum_{k=0}^{N-1} d(T^k x, T^k y).
\]

Define $D_k = d(T^k x, T^k y)$ for $k = 0, 1, \dots, N$. Then the inequality becomes
\[
D_1 + D_2 + \cdots + D_N \leq \alpha (D_0 + D_1 + \cdots + D_{N-1}).
\]

Bringing all terms to the left side
\[
(1-\alpha)D_1 + (1-\alpha)D_2 + \cdots + (1-\alpha)D_{N-1} + D_N \leq \alpha D_0.
\]

Since all terms on the left are nonnegative, we have in particular
\[
(1-\alpha)D_1 \leq \alpha D_0,
\]
as the inequality must hold for each nonnegative term individually.

This implies
\[
d(Tx, Ty) = D_1 \leq \frac{\alpha}{1-\alpha} d(x, y).
\]

Therefore, $T$ is Lipschitz continuous with Lipschitz constant 
$L = \frac{\alpha}{1-\alpha}$.
\end{proof}

\begin{remark}
This result shows that PA-contractions inherit the continuity property from Banach contractions. The Lipschitz constant $\frac{\alpha}{1-\alpha}$ depends only on the contraction parameter $\alpha$ and is independent of the b-metric coefficient $s$ and the averaging parameter $N$.
\end{remark}

%%%%%%%%%%%%%%%%% end continuity
%%%%%%%%%%%%%%%%%%%%%%%%%%%%%%%%

%%%%%%%%%%%%%%%%%%%%%%%%%%%%%%%%%%%%%%%%%%%%%%%%% end continuity lemma

Our goal now is to prove the following theorem.

%%%%%%%%%%%%%%%%%%%%%%%%%%%%%%%%%%%%%%%%%%%%%%%%%
%%%%%%%%%%%%%%%% Begin New Main

\begin{theorem}[Main Theorem]
\label{thm:main}
Let $(X, d)$ be a complete b-metric space with coefficient $s \geq 1$, and let $T: X \to X$ be a PA-contraction with constants $\alpha \in (0,1)$ and $N \in \mathbb{N}$ such that $s \alpha^{1/N} < 1$. Then $T$ has a unique fixed point in $X$, and for every $x \in X$, the sequence $\{T^n x\}$ converges to this fixed point.
\end{theorem}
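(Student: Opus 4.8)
The plan is to run a Picard-iteration argument adapted to the relaxed triangle inequality, invoking the path-averaging hypothesis \eqref{eq:pa} twice: once along the orbit to force geometric decay of the distances between consecutive iterates, and once with both arguments equal to fixed points to get uniqueness.

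First I would fix $x_0\in X$, set $x_n=T^nx_0$ and $a_n=d(x_n,x_{n+1})$, and extract a sliding-window contraction. Applying \eqref{eq:pa} with base point $T^m x_0$, companion point $T^{m+1}x_0$, and $n=N$, every term $d\big(T^k(T^mx_0),T^k(T^{m+1}x_0)\big)$ collapses to $a_{k+m}$, so the inequality becomes $\sum_{j=m+1}^{m+N}a_j\le\alpha\sum_{j=m}^{m+N-1}a_j$. Writing $b_m=\sum_{j=m}^{m+N-1}a_j$, this is $b_{m+1}\le\alpha b_m$ for all $m\ge 0$, hence $b_m\le\alpha^m b_0$ and in particular $a_m\le b_m\le\alpha^m b_0$. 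This genuine geometric decay is what will let us absorb the powers of $s$ in the next step; the cruder consequence that merely $\sum_m a_m<\infty$ (which is all one gets by instead letting the averaging window grow from a fixed start) would not be enough.

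Next I would prove that $\{x_n\}$ is Cauchy. Iterating the $b$-triangle inequality~(iii) in the standard way gives, for $m>n$,
\[
d(x_n,x_m)\le\sum_{i=n}^{m-2}s^{\,i-n+1}a_i+s^{\,m-n-1}a_{m-1}.
\]
Substituting $a_i\le\alpha^i b_0$ and reindexing, the right-hand side is bounded by $b_0\,\alpha^n\big(\tfrac{s}{1-s\alpha}+1\big)$; here the geometric series converges precisely because $s\alpha<1$, and since $\alpha<1$ the whole bound tends to $0$ as $n\to\infty$, uniformly in $m$. (If $b_0=0$ the orbit is constant and $x_0$ is already fixed, so that case is covered as well.) Completeness then gives $x_n\to x^\ast$ for some $x^\ast\in X$, and continuity of $T$ yields $Tx^\ast=T(\lim_n x_n)=\lim_n x_{n+1}=x^\ast$.

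Finally, for uniqueness, if $x^\ast$ and $y^\ast$ are fixed points then $T^kx^\ast=x^\ast$ and $T^ky^\ast=y^\ast$ for all $k$, so \eqref{eq:pa} with $x=x^\ast$, $y=y^\ast$ and any $n\ge N$ reads $n\,d(x^\ast,y^\ast)\le\alpha\,n\,d(x^\ast,y^\ast)$, forcing $d(x^\ast,y^\ast)=0$. Since every orbit converges to a fixed point and the fixed point is unique, all orbits converge to this same $x^\ast$, which is the last assertion. The hard part will be the Cauchy estimate: one must squeeze genuine geometric decay out of the averaged hypothesis and then verify that the factors $s^{\,i-n+1}$ produced by the $b$-metric inequality are dominated — this is exactly where $s\alpha<1$ is used in an essential way, and it explains why the assumption cannot simply be weakened to $\alpha<1$ with this method.
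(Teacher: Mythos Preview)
Your proof is correct and follows essentially the same route as the paper: geometric decay of $a_m=d(T^m x_0,T^{m+1}x_0)$ from the PA-condition applied to shifted pairs, the Cauchy estimate via the iterated $b$-triangle inequality controlled by $s\alpha<1$, then continuity for existence and the PA-condition on constant orbits for uniqueness. Your sliding-window argument with fixed window length $n=N$ is in fact slightly more streamlined than the paper's version, which first proves summability of $\sum a_k$ in a separate step before deriving the same bound $a_m\le C\alpha^m$.
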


%%%%%%%%%%%%%%%%%%% End New main
%%%%%%%%%%%%%%%%%%%%%%%%%%%%%%%%%%%%%%%%%%%%%%%%%

The proof proceeds in several steps: we first show that the sequence of iterates is a Cauchy sequence, then that it converges to a fixed point, and finally that the fixed point is unique.
%\section{Proof of the Main Theorem}
%\label{sec:maintheorem-pa}

\subsection{Step 1: Summability of successive distances}

\begin{proof}

Fix an arbitrary point $ x \in X $, and define the sequence $ x_n = T^n x $. Let
\[
a_k = d(T^k x, T^{k+1} x) = d(x_k, x_{k+1}).
\]
We will show that the series $ \sum_{k=0}^\infty a_k $ converges.

Consider the pair $ (x, Tx) $. Apply the PA-contraction condition to this pair. For $ n \geq N $, we have
\[
\sum_{k=0}^{n-1} d(T^{k+1}x, T^{k+1}(Tx)) \leq \alpha \sum_{k=0}^{n-1} d(T^k x, T^k (Tx)).
\]
But $ T^k(Tx) = T^{k+1}x $, so
\[
d(T^k x, T^k (Tx)) = d(T^k x, T^{k+1} x) = a_k,
\]
and
\[
d(T^{k+1}x, T^{k+1}(Tx)) = d(T^{k+1}x, T^{k+2}x) = a_{k+1}.
\]
Thus, the inequality becomes
\[
\sum_{k=0}^{n-1} a_{k+1} \leq \alpha \sum_{k=0}^{n-1} a_k
\quad \Rightarrow \quad
\sum_{k=1}^{n} a_k \leq \alpha \sum_{k=0}^{n-1} a_k.
\]
%%%%%%%%%%%%%%%%%%%%%%%%%%%%%%%%%%%%%%%%%%%%%%%%%%%%%

%%%%%%%%%%%%%%%%%%%%%%%%%%%%%%%%%%%%%%%%%%%%%%%%%%%%%
%%%%%%%%%%%%%%%%%%%%%%%%%%%%%%%%%%%%%%%%%%%%%%%
Let $ S_n = \sum_{k=0}^{n-1} a_k $. Then
\[
\sum_{k=1}^{n} a_k = \sum_{k=0}^{n} a_k - a_0 = S_{n+1} - a_0,
\]
and the inequality $ \sum_{k=1}^{n} a_k \leq \alpha S_n $ becomes
\[
S_{n+1} - a_0 \leq \alpha S_n
\quad \Rightarrow \quad
S_{n+1} \leq \alpha S_n + a_0.
\]
%%%%%%%%%%%%%%%%%%%%%%%%%%%%%%%%%%%%%%%
This is a recursive inequality. Since $ \alpha \in (0,1) $, standard analysis shows that $ \{S_n\} $ is bounded. Indeed, iterating
\begin{align*}
S_{n+1} &\leq \alpha S_n + a_0 \\
&\leq \alpha (\alpha S_{n-1} + a_0) + a_0 = \alpha^2 S_{n-1} + \alpha a_0 + a_0 \\
&\leq \cdots \leq \alpha^n S_1 + a_0 (1 + \alpha + \cdots + \alpha^{n-1}) \\
&\leq \alpha^n S_1 + \frac{a_0}{1 - \alpha}.
\end{align*}
Hence, $ \sup_n S_n < \infty $, so $ \sum_{k=0}^\infty a_k < \infty $. In particular, $ a_k \to 0 $.

%%%%%%%%%%%%%%%%%%%%%%%%%%%%%%%%%%%%%%%%%%%%%%%%%%%%%%%%%%%%%
%%%%%%%%%%%%%%%%%%%%%%%% step 2 cauchy sequence
\subsection{Step 2: The sequence $ \{T^n x\} $ is a Cauchy sequence}

We now prove that the sequence $ \{a_k\} $ decays geometrically. 
Fix $ p \geq 0 $ and apply the PA-contraction condition to the pair $ (T^p x, T^{p+1} x) $. 
For all $ n \geq N $, we have
\[
\sum_{k=0}^{n-1} a_{p+k+1} \leq \alpha \sum_{k=0}^{n-1} a_{p+k}.
\]
Let $ n = N $. Then
\[
\sum_{k=0}^{N-1} a_{p+k+1} \leq \alpha \sum_{k=0}^{N-1} a_{p+k}.
\]
This is equivalent to
\[
\sum_{k=1}^{N} a_{p+k} \leq \alpha \left( a_p + \sum_{k=1}^{N-1} a_{p+k} \right).
\]
Rearranging, we obtain
\[
a_{p+N} + (1-\alpha) \sum_{k=1}^{N-1} a_{p+k} \leq \alpha a_p.
\]
Since all terms are nonnegative, we have
\[
a_{p+N} \leq \alpha a_p.
\]
By induction, for any integer $ m \geq 0 $,
\[
a_{p+mN} \leq \alpha^m a_p.
\]
Now, for any $ k \in \mathbb{N} $, write $ k = mN + r $ with $ 0 \leq r < N $. Then
\[
a_k = a_{mN+r} \leq \alpha^m a_r.
\]
Let $ M = \max\{a_0, a_1, \dots, a_{N-1}\} $. Then
\[
a_k \leq M \alpha^m = M \alpha^{(k-r)/N} = \frac{M}{\alpha^{r/N}} \alpha^{k/N} \leq \frac{M}{\alpha^{(N-1)/N}} \alpha^{k/N}.
\]
Define $ C = \frac{M}{\alpha^{(N-1)/N}} $ and $ \gamma = \alpha^{1/N} $. Then $ \gamma < 1 $ and
\[
a_k \leq C \gamma^k.
\]

Now we estimate $ d(T^n x, T^m x) $ for $ m > n $. By the b-metric triangle inequality:
\[
d(T^n x, T^m x) \leq \sum_{j=n}^{m-1} s^{j-n+1} a_j \leq \sum_{j=n}^{m-1} s^{j-n+1} C \gamma^j = C s \gamma^n \sum_{i=0}^{m-n-1} (s \gamma)^i.
\]
Since $ s \gamma = s \alpha^{1/N} < 1 $, the geometric series converges:
\[
d(T^n x, T^m x) \leq \frac{C s \gamma^n}{1 - s \gamma}.
\]
As $ n \to \infty $, the right-hand side tends to 0, so $ \{T^n x\} $ is a Cauchy sequence.

Since $ X $ is complete, there exists $ z \in X $ such that $ T^n x \to z $ as $ n \to \infty $.

%%%%%%%%%%%%%%%%%%%%%%%% end step 2 cauchy sequence
%%%%%%%%%%%%%%%%%%%%%%%%%%%%%%%%%%%%%%%%%%%%%%%%%%%%%%%%%%%%%%%%%

\subsection{Step 3: $ z $ is a fixed point of $ T $}

We now show that $ Tz = z $.
Since $ T^n x \to z $ and $ T $ is continuous (by Lemma~\ref{thm:continuity}), it follows that
\[
T(T^n x) = T^{n+1} x \to Tz \quad \text{as } n \to \infty.
\]
On the other hand, $ \{T^{n+1} x\} $ is a subsequence of $ \{T^n x\} $, so $ T^{n+1} x \to z $. In a b-metric space, limits are unique: if $ x_n \to x $ and $ x_n \to y $, then
\[
d(x, y) \leq s \big( d(x, x_n) + d(x_n, y) \big) \to 0,
\]
so $ d(x, y) = 0 $ and hence $ x = y $. Therefore, $ Tz = z $.

Thus, $ z $ is a fixed point of $ T $.

%%%%%%%%%%%%%%%%%%%%%%%%%%%%%%%%%%%%%%%%%%%%%%%%%%%%%%%%%%%%%%%%%%%%%%

\subsection{Step 4: Uniqueness of the fixed point}

Suppose $ z, w \in X $ are two fixed points of $ T $, i.e., $ Tz = z $, $ Tw = w $. Then for all $ k \geq 0 $, $ T^k z = z $, $ T^k w = w $. Thus,
\[
d(T^k z, T^k w) = d(z, w).
\]
Apply the PA-contraction condition to $ (z, w) $ for $ n \geq N $
\[
\sum_{k=0}^{n-1} d(T^{k+1}z, T^{k+1}w) \leq \alpha \sum_{k=0}^{n-1} d(T^k z, T^k w)
\quad \Rightarrow \quad
\sum_{k=0}^{n-1} d(z, w) \leq \alpha \sum_{k=0}^{n-1} d(z, w).
\]
This simplifies to
\[
n \cdot d(z, w) \leq \alpha \cdot n \cdot d(z, w)
\quad \Rightarrow \quad
(1 - \alpha) d(z, w) \leq 0.
\]
Since $ \alpha < 1 $, we have $ 1 - \alpha > 0 $, so $ d(z, w) = 0 $. Hence, $ z = w $.

\end{proof}
%%%%%%%%%%%%%%%%%%%%%%%%%%%%%%
%%%%%%%%% Begin Remark 

\begin{remark}
The condition $s\alpha^{1/N} < 1$ arises from the geometric decay rate 
$\gamma = \alpha^{1/N}$ established in the proof and the requirement that 
the series $\sum (s\gamma)^k$ converges. This is stronger than the 
condition $\alpha < 1$ that suffices for some Banach contraction results 
in b-metric spaces, but is necessary for the PA-contraction framework 
due to the path-averaged nature of the contraction condition.
\end{remark}

%%%%%%%%%%%%%%%%% end Remark
%%%%%%%%%%%%%%%%%%%%%%%%%%%%%%%%

%%%%%%%%%%%%%%%%%%%%%%%%%%%%%%%%%%%%%%%%%%%%%%%%%%%%%%%%%%%%%%%
\section{Banach contraction in b-metric space}
\label{sec:banach}

Recently, Path-Averaged (PA) contractions have been introduced, where contraction is measured in terms of the average distance along orbits.

In the following sections, we clarify the relationship between the notion of Path-Averaged contractions and Banach contraction in metric spaces. Specifically, we prove:
\begin{itemize}
\item Every Banach contraction is a PA-contraction.
    \item Not every PA-contraction is a Banach contraction.
\end{itemize}
We provide a finite, explicit counterexample using the discrete metric to illustrate the proper inclusion.

We recall that
a mapping $ T: X \to X $ is a \textbf{Banach contraction} if there exists $ \beta \in (0,1) $ such that
\[
d(Tx, Ty) \leq \beta\, d(x, y), \quad \text{ for all } x, y \in X.
\]

%\section{Banach Contractions are PA-Contractions}
\label{sec:ba-pa}
We first show that the class of Banach contractions is contained in the class of PA-contractions.

\begin{proposition}
Let $ (X, d) $ be a metric space. If $ T: X \to X $ is a Banach contraction with constant $ \beta \in (0,1) $, then $ T $ is a PA-contraction with $ \alpha = \beta $ and $ N = 1 $.
\end{proposition}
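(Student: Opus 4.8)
The plan is to verify the defining inequality of a PA-contraction directly, using only the Banach condition applied iteratively. First I would observe that for a Banach contraction with constant $\beta$, a trivial induction on $k$ gives $d(T^k x, T^k y) \leq \beta^k d(x,y)$ for every $x,y \in X$ and every $k \geq 0$; more to the point, applying the Banach inequality to the pair $(T^k x, T^k y)$ yields the one-step bound $d(T^{k+1}x, T^{k+1}y) \leq \beta\, d(T^k x, T^k y)$, which is all that is actually needed.

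Next I would sum this one-step bound over $k = 0, 1, \dots, n-1$ for an arbitrary $n \in \mathbb{N}$. Term by term,
\[
\sum_{k=0}^{n-1} d(T^{k+1}x, T^{k+1}y) \;\leq\; \sum_{k=0}^{n-1} \beta\, d(T^k x, T^k y) \;=\; \beta \sum_{k=0}^{n-1} d(T^k x, T^k y).
\]
This is exactly inequality~\eqref{eq:pa} with $\alpha = \beta$, and since the argument is valid for every $n \geq 1$, it holds in particular for all $n \geq N$ with $N = 1$. Since $\beta \in (0,1)$, the constant $\alpha = \beta$ lies in $(0,1)$ as required, so $T$ is a PA-contraction with the claimed parameters.

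Honestly, there is no real obstacle here: the result is essentially immediate once one notices that the PA-contraction inequality follows by summing the pointwise contraction estimate applied along the orbit. The only thing to be slightly careful about is that the Banach inequality is applied not to $(x,y)$ but to the iterated pair $(T^k x, T^k y)$ — which is legitimate precisely because the Banach condition is assumed for \emph{all} points of $X$. No completeness, no continuity, and no restriction relating $\beta$ to the b-metric coefficient is needed, since this proposition is stated in an ordinary metric space (equivalently, a b-metric space with $s = 1$).
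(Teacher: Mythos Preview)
Your proof is correct. Both you and the paper proceed by direct verification of the PA inequality from the Banach condition, so the approaches are in the same spirit. Your execution is in fact cleaner: you sum the one-step estimate $d(T^{k+1}x,T^{k+1}y)\le\beta\,d(T^kx,T^ky)$ term by term, whereas the paper first passes to the iterated bound $d(T^kx,T^ky)\le\beta^k d(x,y)$, bounds \emph{both} sides of the PA inequality above by the same geometric sum $\sum_{k}\beta^k d(x,y)$, and then asserts the conclusion --- a step that, as written, does not follow (an upper bound on the right-hand side cannot be used that way). Your direct termwise summation sidesteps this issue entirely and is the logically sound way to present the argument.
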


\begin{proof}
Immediate.
\end{proof}
%%%%%%%%%%%%%%%%%%%%%%%%%%%%%%%%

%\section{PA-Contractions are Not Necessarily Banach Contractions}
\label{sec:pa-not-ba}

We now show that the converse is false: there exist PA-contractions that are not Banach contractions.

\begin{example}
\label{ex:discretemetric}
Let $ X = \{0, 1, 2\} $, and define the \textbf{discrete metric}
\[
d(x, y) = 
\begin{cases}
0 & \text{if } x = y, \\
1 & \text{if } x \ne y.
\end{cases}
\]
Define $ T: X \to X $ by
\[
T(0) = 1, \quad T(1) = 2, \quad T(2) = 2.
\]
Then
\begin{enumerate}[label=\textup{(\alph*)}]
    \item $ T $ is \textbf{not} a Banach contraction.
    \item $ T $ \textbf{is} a PA-contraction.
\end{enumerate}
\end{example}

\begin{proof}
We prove each part separately.

\subsection*{(a) $ T $ is not a Banach contraction}

Suppose, for contradiction, that $ T $ is a Banach contraction with some $ \beta \in (0,1) $. Then
\[
d(Tx, Ty) \leq \beta\, d(x, y), \quad \forall x, y \in X.
\]

Consider $ x = 0 $, $ y = 1 $. Then,
$ d(0,1) = 1 $;
$ d(T(0), T(1)) = d(1,2) = 1 $.

So $ 1 = d(T(0), T(1)) \leq \beta \cdot 1 = \beta < 1 $, a contradiction.

Similarly, $ d(T(0), T(2)) = d(1,2) = 1 = d(0,2) $, so again $ 1 \leq \beta < 1 $ follows.

Hence, $ T $ is not a Banach contraction.

\subsection*{(b) $ T $ is a PA-contraction}

We show that there exist $ \alpha \in (0,1) $, $ N \in \mathbb{N} $ such that for all $ x, y \in X $ and all $ n \geq N $,
\[
\sum_{k=0}^{n-1} d(T^{k+1}x, T^{k+1}y) \leq \alpha \sum_{k=0}^{n-1} d(T^k x, T^k y).
\]

First, compute the iterates of $ T $
\begin{align*}
T^0(x) &= x, \\
T^1(x) &= 1 \text{ for } x=0, \quad 2 \text{ for } x=1,2, \\
T^2(0) &= T(1) = 2, \quad T^2(1) = T(2) = 2, \quad T^2(2) = 2, \\
T^k(x) &= 2 \quad \text{for all } k \geq 2, \text{ and all } x \in X.
\end{align*}

Thus, all orbits reach the fixed point $ 2 $ by the second iteration and remain there.

Now, analyze all distinct pairs $ (x, y) $, $ x \ne y $. There are three such unordered pairs.

\subsubsection*{Case 1: $ x = 0, y = 1 $}

Compute $ \delta_k = d(T^k(0), T^k(1)) $:
\item $ k=0 $: $ d(0,1) = 1 $,
    \item $ k=1 $: $ d(1,2) = 1 $,
    \item $ k \geq 2 $: $ d(2,2) = 0 $.    
So $ \delta_k = [1, 1, 0, 0, \dots] $.

For $ n \geq 2 $,
 $ \sum_{k=0}^{n-1} \delta_k = 1 + 1 = 2 $;
 $ \sum_{k=0}^{n-1} \delta_{k+1} = \sum_{k=1}^{n} \delta_k = 1 + 0 + \cdots + 0 = 1 $.

So
\[
\sum_{k=0}^{n-1} d(T^{k+1}x, T^{k+1}y) = 1 \leq \alpha \cdot 2
\quad \text{if} \quad \alpha \geq \frac{1}{2} .
\]

\subsubsection*{Case 2: $ x = 0, y = 2 $}

$ k=0 $: $ d(0,2) = 1 $;
$ k=1 $: $ d(1,2) = 1 $;
$ k \geq 2 $: $ d(2,2) = 0 $.

Same as Case 1: sum = 2, shifted sum = 1. So again:
\[
1 \leq 2\alpha \quad \text{if} \quad \alpha \geq 1/2
\]

\subsubsection*{Case 3: $ x = 1, y = 2 $}

$ k=0 $: $ d(1,2) = 1 $,
$ k=1 $: $ d(2,2) = 0 $,
$ k \geq 2 $: $ d(2,2) = 0 $.

So $ \delta_k = [1, 0, 0, \dots] $ .

For $ n \geq 1 $,
$ \sum_{k=0}^{n-1} \delta_k = 1 $;
$ \sum_{k=0}^{n-1} \delta_{k+1} = \sum_{k=1}^{n} \delta_k = 0 $.

So
\[
0 \leq \alpha \cdot 1 \quad \text{for any } \alpha > 0 .
\]

\subsection*{Conclusion of (b)}

For all $ x \ne y $ and $ n \geq 2 $, the worst-case ratio of sums is $ 1/2 $. Therefore, choose for instance $ \alpha = \frac{2}{3} \in (0,1) $ and $ N = 2 $. Then for all $ x, y \in X $, $ n \geq 2 $
\[
\sum_{k=0}^{n-1} d(T^{k+1}x, T^{k+1}y) \leq \frac{2}{3} \sum_{k=0}^{n-1} d(T^k x, T^k y).
\]

Hence, $ T $ is a PA-contraction.

\end{proof}

\begin{remark}
Since every metric space is a b-metric space with coefficient $ s = 1 $, and the counterexample provided lies in such a space, the strict inclusion of Banach contractions in PA-contractions extends to the class of b-metric spaces. Therefore, PA-contractions form a strictly larger class than Banach contractions in both metric and b-metric settings.
\end{remark}

%%%%%%%%%%%%%%%%%%%%%%%%%%%%%%%%%%%%%%%%%%%%%%%%%%%%%%%%%%%
%%%%%%%%%%%%%%%%%%%%%%%%%%%%%%%%%%%%%%%%%%%%%%%%%%%%%%%%%%%%%%%
%%%%%%%%%%%%%%%%%%%%% Kannan contractions

\section{Comparison with Kannan Contractions}
\label{sec:comparison-kannan}

We recall the definition of a Kannan contraction.

\begin{definition}[Kannan contraction]
Let $ (X, d) $ be a b-metric space. A mapping $ T: X \to X $ is a \textbf{Kannan contraction} if there exists $ \beta \in \left(0, \frac{1}{2s}\right) $ such that for all $ x, y \in X $,
\[
d(Tx, Ty) \leq \beta \left[ d(x, Tx) + d(y, Ty) \right].
\]
\label{def:kannan}
\end{definition}

Note that in metric spaces ($ s = 1 $), the condition reduces to $ \beta < \frac{1}{2} $, which is the classical Kannan constant.

We now compare PA-contractions with Kannan contractions in metric spaces. We show that the class of PA-contractions is not contained in the class of Kannan contractions.

In particular, we demonstrate that there exist PA-contractions that are not Kannan contractions.

\begin{example}
We use the mapping $T$ from Example~\ref{ex:discretemetric}.
As shown in Section~\ref{sec:pa-not-ba}, $ T $ is a PA-contraction with $ \alpha = \frac{2}{3} $, $ N = 2 $, but not a Banach contraction.

We now show that $ T $ is also \textbf{not} a Kannan contraction.

Suppose for contradiction that there exists $ \beta < \frac{1}{2} $ such that
\[
d(Tx, Ty) \leq \beta [d(x, Tx) + d(y, Ty)], \quad \forall x, y \in X.
\]

Take $ x = 0 $, $ y = 1 $. Then,
$ d(T(0), T(1)) = d(1,2) = 1 $, 
$ d(0, T(0)) = d(0,1) = 1 $, 
$ d(1, T(1)) = d(1,2) = 1 $.
Right-hand side: $ \beta(1 + 1) = 2\beta < 1 $.

So $ 1 \leq 2\beta < 1 $, contradiction.

Hence, $ T $ is a PA-contraction but not a Kannan or Banach contraction.
\end{example}

\begin{corollary}
The class of PA-contractions is not contained in the class of Kannan contractions.
\end{corollary}

\begin{remark}
The question of whether every Kannan contraction is a PA-contraction remains open. While we have shown that there exist PA-contractions that are not Kannan contractions, we have not established the converse relationship. The structural differences between the definitions — one based on self-distances, the other on orbit averages — suggest that these classes may be incomparable, but a definitive conclusion requires either a proof that all Kannan contractions are PA-contractions or an explicit counterexample of a Kannan contraction that fails the PA condition.
\end{remark}

%%%%%%%%%%%%%%%%%%%%%%%%%%%%%%%%%%%%%%%%%%%%%%%%%%%%%%%%%%%%%%%

%%%%%%%%%%%%%%%%%%%%%%%%%%%%%%%%%%%%%%%%%%%%%%%%%%%%%%%%%%%%%%

\section{Conclusion}
\label{sec:conclusion}

In this paper, we have extended the theory of Path-Averaged Contractions (PA-contractions) from metric to b-metric spaces. We proved that every PA-contraction 
on a complete b-metric space has a unique fixed point, provided the contraction constant $ \alpha $ and the b-metric coefficient $ s $ satisfy 
$ s \alpha^{1/N} < 1 $. The result relies on the geometric decay of successive distances and careful application of the generalized triangle inequality.

Furthermore, we clarified the relationship between PA-contractions and two classical classes:
\begin{itemize}
\item Every Banach contraction is a PA-contraction, but not conversely --- hence, PA-contractions strictly generalize Banach contractions.
\item The class of PA-contractions is not contained in the class of Kannan contractions, as demonstrated by an explicit counterexample.
\end{itemize}
These comparisons highlight the distinct nature of path-averaged contraction as a principle that captures cumulative orbital behavior rather than pointwise control.

The relationship between PA-contractions and other classical principles -- such as Chatterjea \cite{Chatterjea1972}, \'Ciri\'c-type mappings \cite{Ciric1974}, and Wardowski-type F-contractions \cite{Wardowski2012,fabiano2022} -- remains open and will be investigated in future work.

%%%%%%%%%%%%%%%%%%%%%%%%%%%%%%%%%%%%%%%%%%%%%%%%%%%%%%%%%%%%%%

%%%%%%%%%%%%%%%%%%%%%%%%%%%%%%%%%%%%%%%%%%%%%%%%%%%%%
%%%%%%%%%%%%%%%%%%%%%%%%%%%%%%%%%%%%%%%%%%%%%%%%%%%%%

\end{document}